\documentclass[11pt,a4]{article}
\usepackage{amsfonts}
\usepackage{graphics,amssymb,amsthm,amsmath,epsf}
\usepackage{graphicx}
\usepackage{subfigure}

\usepackage[round]{natbib}

\topmargin=-2cm \oddsidemargin=.2cm \evensidemargin=-2cm
\textwidth=16cm \textheight=24.2cm

\numberwithin{equation}{section}

\newtheorem{theorem}{Theorem}[section]

\newtheorem{remark}{Remark}[section]

\usepackage{fancyhdr}

\pagestyle{fancy}
\lhead{\footnotesize \parbox{11cm}{SORT - Statistics and Operations Research Transactions (Accepted)}}
\rhead{}

\begin{document}

\date{}
\title{ Estimators for the Parameter Mean of Morgenstern Type Bivariate Generalized Exponential Distribution Using Ranked Set Sampling}
\author{S. Tahmasebi$^1$, A. A. Jafari$^2\thanks{Corresponding: aajafari@yazd.ac.ir}$ \\
{\small $^{1}$Department of Statistics, Persian Gulf University, Bushehr, Iran}\\
{\small $^{2}$Department of Statistics, Yazd University, Yazd,  Iran}}
\date{}
\maketitle
\begin{abstract}
 In situations where the sampling units in a study can be more easily ranked based on the measurement of an auxiliary variable,  ranked set sampling provide unbiased  estimators for the mean of a population that they are more efficient than unbiased  estimator based on simple random sample.  In this paper, we consider the Morgenstern type bivariate generalized exponential distribution (MTBGED) and obtain several unbiased estimators for a  parameter  mean of the marginal distribution of  MTBGED  based on different ranked set sampling schemes. The efficiency of all considered estimators are evaluate and has also been demonstrated with numerical illustrations.

\noindent{\bf MSC:} 62D05; 62F07;62G30

\noindent{\bf Keywords:}  Concomitants of order statistics; Morgenstern type bivariate generalized exponential distribution; Ranked set sampling.
\end{abstract}

\section{Introduction}

The Ranked set sampling (RSS) was first suggested by \cite{mcIntyre-52}
 for estimating the mean pasture and forage yields. His
described RSS is applicable whenever ranking of a set of sampling units
can be done easily by a judgement method with respect to the variable of
interest.
Later,
\cite{tak-wak-68}
provided the statistical foundation  and necessary  mathematical properties of the method.
They indicated that in situations where the sampling units in a study can be more easily ranked based on the measurement of an auxiliary variable,  RSS provide unbiased  estimators for the mean of a population, and these estimators are more efficient than unbiased  estimator based on simple random sample (SRS).


 The RSS technique is composed of two stages in sample selection procedure: At the first stage, $n$ simple random samples of size $n$ are drawn from a population and each sample is called a set. Then, each of units are ranked from the smallest to the largest according to variable of interest, say $Y$, in each set based on a low-level measurement such as using a concomitant variable or previous experiences. At the second stage, the first unit from the first set, the second unit from the second set and going on like this $n$th unit from the $n$th set are taken and measured according to the variable $Y$. The obtained sample is called a RSS. It can be noted that the units of this sample
 are independent order statistics but not identically distributed. The reader can refer to the book of
\cite{ch-ba-si-04}
for details of RSS and its applications.

Other schemes and modifications of RSS was investigated in the literature:
A modified RSS procedure is introduced by
\cite{stokes-80}
and only the largest or the smallest judgment ranked unit  is chosen for quantification in each set.
In estimating the population mean,
\cite{sa-ah-ab-96}
suggested the extreme ranked set sampling (ERSS),
\cite{muttlak-97}
suggested the median RSS,
\cite{je-al-06}
suggested double quartile ranked set samples, and
\cite{alo-als-01} suggested moving extreme ranked set sampling (MERSS).
\cite{yu-ta-02}
considered the problem of estimating the mean of a population  based on RSS with censored data.
\cite{als-alk-00}
considered double RSS (DRSS), and
\cite{als-alo-02}
 generalized the DRSS to the multistage ranked set sampling (MSRSS) method.
For the  mean normal or exponential,
\cite{si-si-pu-96}
used the median ranked set sampling (MRSS) to modify the RSS estimators
\cite{muttlak-03}
introduced percentile ranked set sampling (PRSS).
\cite{Al-Nasser-07}
proposed a generalized robust sampling method called L ranked set sampling (LRSS) and showed that the  estimator for mean based on the LRSS  is unbiased if the underlying distribution is symmetric. A robust extreme ranked set sampling (RERSS) is proposed by
\cite{aln-mu-09}
for estimating the population mean.

RSS and its modifications are applied  for estimating a parameter in a bivariate population $(X, Y)$, where $Y$ is the variable of interest and $X$ is a concomitant variable that is not of direct interest but is relatively easy to measure or to order by judgment:
\cite{stokes-77}
studied RSS with concomitant variables.
\cite{ba-mo-97}
derived the best linear unbiased estimator (BLUE) for the mean of $Y$, based on a ranked set sample obtained using an auxiliary variable $X$.
\cite{als-ala-07}
estimated the means of the bivariate normal distribution using moving extremes RSS.
\cite{ch-th-08}
and
\cite{als-di-09}
considered estimation of a parameter of Morgenstern
type bivariate exponential distribution and Downton's bivariate exponential distribution, respectively.
\cite{ta-ja-12}
assumed Morgenstern type bivariate uniform distribution and  obtained several estimators for a scale parameter.

The distribution function of a Morgenstern type  bivariate generalized exponential distribution (MTBGED) is defined as
\begin{eqnarray}\label{eq.FXY}
F_{X,Y}(x,y)=(1-e^{-\theta_{1}x})^{\alpha_{1}}(1-e^{-\theta_{2}y})^{\alpha_{2}}[1+\lambda(1-(1-e^{-\theta_{1}x})^{\alpha_{1}})(1-(1-e^{-\theta_{2}y})^{\alpha_{2}})],\\
x,y>0,\;-1\leq\lambda\leq1, \; \alpha_1,\alpha_2,\theta_1,\theta_2>0,\nonumber
\end{eqnarray}
with the corresponding probability density function (pdf)
\begin{eqnarray}\label{eq.fXY}
f_{X,Y}(x,y)&=&\alpha_{1}\alpha_{2}\theta_{1}\theta_{2}e^{-\theta_{1}x-\theta_{2}y}(1-e^{-\theta_{1}x})^{\alpha_{1}-1}(1-e^{-\theta_{2}y})^{\alpha_{2}-1}\nonumber\\
&&\times\left\{1+\lambda[2(1-e^{-\theta_{1}x})^{\alpha_{1}}-1][2(1-e^{-\theta_{2}y})^{\alpha_{2}}-1]\right\}.
\end{eqnarray}

Note that when $(X, Y)$ has  MTBGED,  the marginal distribution of $X$ and $Y$ are the  generalized  exponential  distribution with the expected values
$$
\mu_{x}=\frac{B(\alpha_{1})}{{\theta_{1} }},  \ \ \ \ \ \ \mu_{y}=\frac{B(\alpha_{2})}{{\theta_{2} }},
$$
respectively, where
$B(\alpha)=\psi \left(\alpha +1\right)-\psi \left(1\right)$
and $\psi \left(.\right)$ is the digamma function. Also,  the correlation coefficient between $X$ and $Y$ is obtained as
\citep[see][]{ta-ja-13}
\begin{eqnarray}\label{eq.rho}
\rho=\frac{\lambda D(\alpha_{1})D(\alpha_{2})}{\sqrt{C(\alpha_{1})C(\alpha_{2})}}=\lambda g(\alpha_{1})g(\alpha_{2}),
\end{eqnarray}
 where $D(\alpha)=B(2\alpha)-B(\alpha)$, $C(\alpha)={\psi'}(1)-{\psi'}(\alpha+1)$, $\psi'\left(.\right)$ is the derivative of the digamma function, and $g(\alpha)=\frac{D(\alpha)}{\sqrt{C(\alpha)}}$.

In this paper, we consider estimation of the parameter  $\mu_y$ when $\alpha_{2}$ is known, and  propose several estimator based on RSS idea. Also, we suggest some improved version of these  estimators. In Section \ref{sec.unb}, we  present unbiased  estimators for the parameter, $\mu_y$ in MTBGED based  on the RSS, LRSS, ERSS, MERSS, and MSRSS methods. We evaluate the efficiency of all considered estimators in Section \ref{sec.eff}.



\section{Unbiased estimators for $\mu_y$ based on different RSS schemes}
\label{sec.unb}

Suppose that the random variable $(X,Y)$ has a MTBGED as defined in (\ref{eq.FXY}). In this section, we find unbiased estimators for the parameter $\mu_y$ based on different sampling schemes. In each case,  first the general pattern of sampling is presented,  and then an unbiased estimator with its variances is given for the parameter  $\mu_y$. Also, the efficiency of proposed estimators are obtained.

\subsection{RSS estimation}

The procedure of RSS is described by
\cite{stokes-77}
for a bivariate random variable by the following steps:
\begin{description}
\item[Step 1.] Randomly select $n$ independent bivariate samples, each of size $n$.

\item[Step 2.] Rank the units within each sample with respect to  variable  $X$ together with the $Y$ variate associated.

\item[Step 3.] In the $r$th sample of size $n$, select the unit $( X_{(r)r},Y_{[r]r})$, $r=1,2,...,n$, where  $X_{(r)r}$ is the measured observation  on the variable $X$ in the $r$th unit and  $Y_{[r]r}$ is the corresponding measurement made on the study variable $Y$ of the same unit.
\end{description}

\noindent Therefore, $Y_{[r]r}$, $r=1,2,3,\cdots,n$, are the RSS observations made on the units of the RSS regarding the study variable $Y$ which is correlated with the auxiliary variable $X$. Therefore, clearly $Y_{[r]r}$ is the concomitant of $r$th order statistic arising from the $r$th sample.

From
\cite{sc-na-99}
the pdf of $Y_{[r]r}$ for $1\leq r\leq n$ is given by
\begin{eqnarray}\label{eq.hr}
h_{[r]r}(y)=\alpha_{2}\theta_{2}e^{-\theta_{2}y}(1-e^{-\theta_{2}y})^{\alpha_{2}-1}[1+\delta_{r}(1-2(1-e^{-\theta_{2}y})^{\alpha_{2}})], \;\ \ \ \;\ 1\leq r\leq n,
\end{eqnarray}
where $\delta_{r}=\frac{\lambda(n-2r+1)}{n+1}$
and its mean and variance of $Y_{[r]r}$  is obtained by
\cite{ta-ja-13}
as
\begin{eqnarray}\label{EV1}
E[Y_{[r]r}]=\frac{1}{\theta_{2}}[B(\alpha_{2})-\delta_{r}D(\alpha_{2})], \quad Var[Y_{[r]r}]=\frac{1}{\theta_{2}^{2}}[C(\alpha_{2})+\delta_{r}(C(2\alpha_{2})-C(\alpha_{2}))].
\end{eqnarray}
Since $Y_{[r]r}$ and $Y_{[s]s}$ for $r \neq s$ are drawn from two independent samples, so we have
$$Cov(Y_{[r]r},Y_{[s]s})=0, \;\;\;\; r\neq s.$$

\begin{theorem}
Based on the RSS procedure, an unbiased estimator for $\mu_y$ is given by
\begin{eqnarray*}
\hat{\mu}_{\text{RSS}} =\frac{1}{n}\sum_{r=1}^{n}Y_{[r]r},
\end{eqnarray*}
and its variance is
\begin{eqnarray} \label{VRSS}
Var(\hat{\mu}_{\text{RSS}})=\frac{C(\alpha_{2})}{n\theta_{2}^{2}}.
\end{eqnarray}
\end{theorem}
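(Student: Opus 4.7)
The plan is to prove both claims by direct linearity-of-expectation and variance-of-sum-of-independent-variables arguments, where the heavy lifting has already been done in equation (\ref{EV1}). The crucial computational ingredient that makes both assertions fall out cleanly is the identity
\[
\sum_{r=1}^{n}\delta_{r} \;=\; \frac{\lambda}{n+1}\sum_{r=1}^{n}(n-2r+1) \;=\; 0,
\]
which I would verify first since it is used in both parts.

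For unbiasedness, I would write $E[\hat{\mu}_{\text{RSS}}] = \frac{1}{n}\sum_{r=1}^{n} E[Y_{[r]r}]$ and plug in the mean from (\ref{EV1}). The term $B(\alpha_2)/\theta_2$ contributes $\mu_y$ after averaging, and the term involving $\delta_r D(\alpha_2)/\theta_2$ vanishes by the identity above; therefore $E[\hat{\mu}_{\text{RSS}}]=\mu_y$.

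For the variance, the key observation is that $Y_{[r]r}$ and $Y_{[s]s}$ with $r\neq s$ come from disjoint independent bivariate samples, so they are independent and their covariance vanishes (as noted just before the theorem statement). Hence $\mathrm{Var}(\hat{\mu}_{\text{RSS}}) = \frac{1}{n^{2}}\sum_{r=1}^{n}\mathrm{Var}(Y_{[r]r})$. Substituting the variance expression from (\ref{EV1}) gives two pieces: the constant $C(\alpha_2)$ summed $n$ times, and a piece proportional to $\sum_{r=1}^{n}\delta_{r}$, which again is zero. What remains is exactly $\frac{C(\alpha_2)}{n\theta_2^{2}}$.

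I do not anticipate any real obstacle: the computation is purely algebraic and relies entirely on prior results cited in the paper, namely the mean and variance formulas (\ref{EV1}) from \cite{ta-ja-13} and the independence across samples. The only place where a small amount of care is needed is the arithmetic identity $\sum_{r=1}^{n}(n-2r+1)=0$, but this is immediate from the telescoping pattern $(n-1)+(n-3)+\cdots+(1-n)$.
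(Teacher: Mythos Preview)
Your proposal is correct and follows essentially the same route as the paper's proof: both establish $\sum_{r=1}^{n}\delta_r=0$, then apply linearity of expectation and independence across samples together with the mean and variance formulas in (\ref{EV1}) to obtain unbiasedness and the variance $C(\alpha_2)/(n\theta_2^2)$. There is nothing missing or different in your argument.
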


\begin{proof}
Since $\sum^n_{r{\rm =1}}{\delta }_r=\sum^n_{r{=1}}{}\frac{\lambda (n-2r+1)}{n+1}=0$, using (\ref{EV1})
\[
E\left(\hat{\mu}_{\text{RSS}}\right)=\frac{1}{n}\sum^n_{r=1}E\left(Y_{[r]r}\right)=\frac{1}{n{\theta }_{2}}\sum^n_{r=1}\left(B({\alpha}_ 2)-{\delta }_rD({\alpha }_{2})\right)=\frac{B({\alpha}_{2})}{\theta_{2}} =\mu _y,
\]
and
\begin{eqnarray*}
Var\left({\hat{\mu }}_{{ \text{RSS}}}\right)&=&\frac{{ 1}}{n}\sum^n_{r=1}{Var\left(Y_{\left[r\right]r}\right)} =\frac{{ 1}}{n^2{\theta }^{{ 2}}_{{ 2}}}\sum^n_{r =1}{\left[C\left({\alpha }_{{ 2}}\right)+{\delta }_r\left(C\left({ 2}{\alpha }_{{ 2}}\right)-C\left({\alpha }_{{ 2}}\right)\right)\right]} \\
&=&\frac{{ 1}}{n^2{\theta }^{{ 2}}_{{ 2}}}\sum^n_{r =1}{\left[C\left({\alpha }_{{ 2}}\right)+{\delta }_r\left(C\left({ 2}{\alpha }_{{ 2}}\right) -C\left({\alpha }_{{ 2}}\right)\right)\right]} =\frac{C{ (}{\alpha }_{{ 2}}{ )}}{n{\theta }^{{ 2}}_{{ 2}}}.
\end{eqnarray*}
\end{proof}


Now, we study the efficiency of $\hat{\mu}_{\text{RSS}}$ relative to the BLUE of $\mu_y$, $\tilde{\mu}$, based on $Y_{[r]r}$, $r=1,2,3,\cdots,n$,  for  MTBGED, when $\lambda$ is known. From
\citet[][p. 185]{da-na-03}
the BLUE of $\mu_y$ is derived as
 \begin{eqnarray*}
\tilde{\mu}=\sum\limits_{r=1}^{n}a_{r}Y_{[r]r},
 \end{eqnarray*}
where
$$a_{r}= \frac{H(\alpha_{2},r)}{W(\alpha_{2},r)}(\sum\limits_{j=1}^{n}\frac{[H(\alpha_{2},j)]^{2}}{W(\alpha_{2},j)})^{-1},\;\; r=1,2,3,\cdots,n,$$
$ H(\alpha_{2},r)=1-\frac{\delta_{r}D(\alpha_{2})}{B(\alpha_{2})}$ and  $W(\alpha_{2},r)=C(\alpha_{2})+\delta_{r}[C(2\alpha_{2})-C(\alpha_{2})]$.
The variance of $\tilde{\mu}$ is
 \begin{eqnarray*}
Var[\tilde{\mu}]=\frac{v_{2}}{\theta_{2}^{2}},
\end{eqnarray*}
where $v_{2}=(\sum\limits_{r=1}^{n}\frac{[H(\alpha_{2},r)]^{2}}{W(\alpha_{2},r)})^{-1}$,
and therefore, the relative efficiency of $\hat{\mu}_{\text{RSS}}$  to  $\tilde{\mu}$ is given by
\begin{eqnarray*}
 e_{1}=e(\tilde{\mu}\mid\hat{\mu}_{\text{RSS}})=\frac{C(\alpha_{2})}{n}\sum\limits_{r=1}^{n}\frac{[H(\alpha_{2},r)]^{2}}{W(\alpha_{2},r)}.
\end{eqnarray*}

In Section \ref{sec.eff}, we calculate
the relative efficiency of $\hat{\mu}_{\text{RSS}}$  to  $\tilde{\mu}$, $e_1$, for some values of parameters and sample size.
\bigskip

\begin{remark}
 We know that the correlation coefficient between $X$ and $Y$ in MTBGED is $\lambda g(\alpha_{1})g(\alpha_{2})$. So when $\alpha_{1}$ and $\alpha_{2}$ are known, by using the sample correlation
coefficient $q$ of the RSS observations $(X_{(r)r},Y_{[r]r})$,
$r=1,2,3,\cdots,n$ an estimator for $\lambda$ is given
\begin{eqnarray*}\hat{\lambda }=\left\{ \begin{array}{lc}
-1 & \ \ \ \ \ \ \ \ \ \ \ \ \ \ \ \ \ \ \ \ \ \   q<-g(\alpha_{1})g(\alpha_{2}) \\
\frac{q}{g(\alpha_{1})g(\alpha_{2})} & \ \   -g(\alpha_{1})g(\alpha_{2})\le q\le g(\alpha_{1})g(\alpha_{2}) \\
1 &  g(\alpha_{1})g(\alpha_{2})<q\ \ \ \ \ \ \ \ \ \ \ \ \end{array} \right.
\end{eqnarray*}
\end{remark}

\bigskip

Sometimes, $k$ units of observations are censored in the RSS schemes. Let $Y_{[m_{r}]m_{r}}$, $r=1,2,...,n-k$, be the ranked set sample observations on the study variable $Y$ which is resulted out of censoring and ranking on the auxiliary variable $X$.
We can represent the ranked set sample observations on the study variate $Y$ as
$p_1Y_{[1]1}$, $p_2Y_{[2]2},\dots,p_nY_{[n]n}$, where $p_r=0$ if the $r$th unit is censored, and $p_r=1$ otherwise.
Consider $k$ units are censored. Hence $\sum_{r=1}^np_r=n-k$.  if we write $m_r, r = 1, 2, \dots , n-k$, as the integers such
that $1 \leq m_1 < m_2 < ... < m_{n-k} \leq n$ and $p_{m_r}= 1$, then
$$
E(\frac{\sum_{r=1}^{n}p_rY_{[r]r}}{n-k})=
\frac{1}{\theta_2}\left(B(\alpha_{2})
-\frac{D(\alpha_{2})}{n-k}\sum_{r=1}^{n-k}\delta_{m_{r}}\right),
$$
Therefore, the ranked set sample mean in the censored case is not an
unbiased estimator for $\mu_y$. However we can construct an
unbiased estimator based on this expected value.

\begin{theorem}
An unbiased estimator  for $\mu_y$ based on the censored RSS is given by
\begin{eqnarray*}
\hat{\mu}_{\text{CRSS}} =
\frac{1}
{w}
\sum_{r=1}^{n-k}Y_{[m_{r}]m_{r}},
\end{eqnarray*}
where $w=n-k+(1-\frac{ B(2\alpha_{2})}{B(\alpha_{2})})
\sum_{r=1}^{n-k}\delta_{m_{r}}$,
and its variance is
\begin{eqnarray*}
Var(\hat{\mu}_{\text{CRSS}})=\frac{v_{3}}{\theta_{2}^{2}},
\end{eqnarray*}
where $v_{3}=\frac{1}{w^{2}}\sum_{r=1}^{n-k}[C(\alpha_{2})
+\delta_{m_{r}}(C(2\alpha_{2})-C(\alpha_{2}))]$.
\end{theorem}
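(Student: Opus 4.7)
The plan is to verify the two claims — unbiasedness with the stated weight $w$, and the form of the variance — by direct calculation, exploiting the fact that the $Y_{[m_r]m_r}$ come from distinct independent samples and so are mutually independent, and that their individual means and variances have already been recorded in (\ref{EV1}) with index $r$ replaced by $m_r$.

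For unbiasedness, I would first write
\begin{eqnarray*}
E[\hat{\mu}_{\text{CRSS}}] = \frac{1}{w}\sum_{r=1}^{n-k}E[Y_{[m_r]m_r}] = \frac{1}{w\theta_2}\left[(n-k)B(\alpha_2) - D(\alpha_2)\sum_{r=1}^{n-k}\delta_{m_r}\right],
\end{eqnarray*}
and then set this equal to $\mu_y = B(\alpha_2)/\theta_2$ to solve for the value of $w$ that forces unbiasedness. This gives $w = (n-k) - \frac{D(\alpha_2)}{B(\alpha_2)}\sum_{r=1}^{n-k}\delta_{m_r}$. The only non-automatic step is checking that this matches the expression in the statement; here I would use the definition $D(\alpha_2) = B(2\alpha_2) - B(\alpha_2)$ to rewrite $-D(\alpha_2)/B(\alpha_2) = 1 - B(2\alpha_2)/B(\alpha_2)$, which recovers exactly the stated $w$.

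For the variance, independence of $Y_{[m_r]m_r}$ across $r$ (since distinct samples are used at each stage of the RSS procedure, as already noted in the paper preceding the RSS theorem) lets the variance of the sum split as a sum of variances, so
\begin{eqnarray*}
Var(\hat{\mu}_{\text{CRSS}}) = \frac{1}{w^2}\sum_{r=1}^{n-k}Var(Y_{[m_r]m_r}) = \frac{1}{w^2\theta_2^2}\sum_{r=1}^{n-k}\bigl[C(\alpha_2) + \delta_{m_r}(C(2\alpha_2) - C(\alpha_2))\bigr],
\end{eqnarray*}
which after factoring out $1/\theta_2^2$ is exactly $v_3/\theta_2^2$ in the notation of the statement.

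There is no real conceptual obstacle; the proof is essentially bookkeeping. The only point that requires a moment's care is the identification $-D(\alpha_2)/B(\alpha_2) = 1 - B(2\alpha_2)/B(\alpha_2)$, which is what translates the natural normalizing constant coming out of the expectation calculation into the form of $w$ displayed in the theorem.
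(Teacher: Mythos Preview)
Your proof is correct and follows essentially the same route as the paper: compute $E[\hat{\mu}_{\text{CRSS}}]$ from (\ref{EV1}), identify the normalizing constant, and read off the variance by independence. Your explicit observation that $-D(\alpha_2)/B(\alpha_2)=1-B(2\alpha_2)/B(\alpha_2)$ is in fact a bit more careful than the paper, which silently rewrites $w$ in that form without comment.
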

\begin{proof}

\begin{eqnarray*}
E(\hat{\mu}_{\text{CRSS}}) =\frac{1}{w}\sum_{r=1}^{n-k}E(Y_{[m_{r}]m_{r}})=\frac{\sum_{r=1}^{n-k}(B(\alpha_{2})
-\delta_{m_r}D(\alpha_{2}))}{(n-k-\frac{ D(\alpha_{2})}{B(\alpha_{2})}
\sum_{r=1}^{n-k}\delta_{m_{r}})\theta_2}=\frac{B(\alpha_{2})
}{\theta_2}=\mu_y,
\end{eqnarray*}
and $Var(\hat{\mu}_{\text{CRSS}})$ can be easily obtain from \eqref{EV1}.
\end{proof}

%

\subsection{LRSS Estimation}
\cite{Al-Nasser-07}
proposed a generalized robust sampling method called L ranked set sampling (LRSS) for estimating population
mean. The procedure of LRSS
with concomitant variable is as follows:
\begin{description}
\item[Step 1.] Randomly select $n$ independent bivariate samples, each of size $n$.

\item[Step 2.] Rank the units within each sample with respect to variable $X$ together with the $Y$ variate associated.
\item[Step 3.] Select the LRSS coefficient, $k = [n\gamma]$, such that  $0\leq\gamma<.5$,  where $[x]$ is the largest integer value less than or equal to $x$.

\item[Step 4.] For each of the first $k + 1$ ranked samples of size $n$,  select the unit $( X_{(k+1)r},Y_{[k+1]r})$, $r=1,2,...,k$.
\item[Step 5.] For each of the last $k + 1$ ranked samples of size $n$, i.e., the
$(n - k)$th to the $n$th
ranked sample,  select the unit $( X_{(n-k)r},Y_{[n-k]r})$, $r=n-k+1,...,n$.
\item[Step 6.]For $j=k+2,...,n-k-1$, select the unit $( X_{(r)r},Y_{[r]r})$, $r=k+1,...,n-k$.
\end{description}

Note that this LRSS scheme  leads to the  RSS when $k=0$, and  to the traditional MRSS when $k=\left[ \frac{n-1}{2} \right]$. Also, the PRSS could be considered as a special case of this scheme.

\begin{theorem}
 An unbiased estimator of $\mu_y$ in MTBGED based on LRSS scheme is given by
\begin{eqnarray*}
\hat{\mu}_{\text{LRSS}}=\frac{1}{n}\left(\sum\limits_{r=1}^{k} Y_{[k+1]r}+\sum\limits_{r=k+1}^{n-k} Y_{[r]r}+\sum\limits_{r=n-k+1}^{n} Y_{[n-k]r}\right),
\end{eqnarray*}
with variance
\begin{eqnarray} \label{VERSS1}
Var(\hat{\mu}_{\text{LRSS}})=Var(\hat{\mu}_{\text{RSS}})=\frac{C(\alpha_{2})}{n\theta_{2}^{2}}.
\end{eqnarray}
\end{theorem}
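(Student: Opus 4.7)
The plan is to mimic the RSS proof, exploiting the independence of the $n$ samples used in the LRSS construction together with the pdf and moment formulas for $Y_{[r]r}$ recorded in (\ref{eq.hr}) and (\ref{EV1}). Since each of the $n$ selected observations comes from a distinct sample, they are mutually independent, so both $E(\hat{\mu}_{\text{LRSS}})$ and $Var(\hat{\mu}_{\text{LRSS}})$ split into sums over the three blocks appearing in the estimator: the $k$ copies that use rank $k+1$, the $n-2k$ copies that use their own rank, and the $k$ copies that use rank $n-k$.

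The crux is a cancellation identity for the $\delta_r$'s. Using $\delta_r=\lambda(n-2r+1)/(n+1)$, I would first observe the symmetry
\[
\delta_{k+1}+\delta_{n-k}=\frac{\lambda\bigl[(n-2k-1)+(n-2(n-k)+1)\bigr]}{n+1}=0,
\]
and, by the same pairing $r\leftrightarrow n-r+1$ on the interval $\{k+1,\dots,n-k\}$ (with the middle index $r=(n+1)/2$, when present, giving $\delta_r=0$), conclude $\sum_{r=k+1}^{n-k}\delta_r=0$. Hence
\[
k\,\delta_{k+1}+\sum_{r=k+1}^{n-k}\delta_r+k\,\delta_{n-k}=0.
\]

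Plugging (\ref{EV1}) into $E(\hat{\mu}_{\text{LRSS}})$ and using this identity, all $D(\alpha_2)$-terms vanish and the remaining $B(\alpha_2)$-terms combine into $\frac{1}{n\theta_2}\cdot n B(\alpha_2)=\mu_y$. For the variance, independence gives
\[
Var(\hat{\mu}_{\text{LRSS}})=\frac{1}{n^{2}\theta_{2}^{2}}\Bigl\{nC(\alpha_{2})+\bigl(C(2\alpha_{2})-C(\alpha_{2})\bigr)\Bigl[k\delta_{k+1}+\sum_{r=k+1}^{n-k}\delta_{r}+k\delta_{n-k}\Bigr]\Bigr\},
\]
and the bracketed sum again vanishes by the same identity, leaving $C(\alpha_2)/(n\theta_2^2)$, which matches $Var(\hat{\mu}_{\text{RSS}})$ in (\ref{VRSS}).

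The main obstacle is really just correctly bookkeeping the three blocks of observations and verifying the pairing argument on $\sum_{r=k+1}^{n-k}\delta_r$; once this telescoping-by-symmetry is in hand, the calculation reduces to the RSS case almost verbatim. No new probabilistic content beyond independence across samples and the marginal moments of $Y_{[r]r}$ is required.
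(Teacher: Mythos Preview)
Your proposal is correct and follows essentially the same approach as the paper: both use independence across samples together with the moments in (\ref{EV1}), and both rely on the vanishing of $k\delta_{k+1}+\sum_{r=k+1}^{n-k}\delta_r+k\delta_{n-k}$. The only cosmetic difference is that the paper evaluates $k\delta_{k+1}$ and $k\delta_{n-k}$ separately and lets them cancel in the final sum, whereas you invoke the symmetry $\delta_{k+1}+\delta_{n-k}=0$ up front; the substance is identical.
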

\begin{proof}
Since
\begin{eqnarray*}
&&\sum^k_{r=1}{{\delta }_{k+1}}
=\frac{\lambda }{n+1}\sum^k_{r=1}{\left(n-2(k+1)+1\right)}
=\frac{\lambda k}{n+1}\left(n-2k-1\right),\\
&&\sum^k_{r=1}{{\delta }_{n-k}}
=\frac{\lambda }{n+1}\sum^n_{r=n-k+1}{\left(n-2(n-k)+1\right)}
=\frac{\lambda k}{n+1}\left(-n+2k+1\right),\\
&&\sum^{n-k}_{r=k+1}{{\delta }_r}
=\frac{\lambda }{n+1}\sum^{n-k}_{r=k+1}{\left(n-2r+1\right)}
=0,
\end{eqnarray*}
we have
\begin{eqnarray*}
E\left({\hat{\mu }}_{\text{LRSS}}\right)&=&\frac{1}{n}\left(\frac{kB\left({\alpha }_2\right)}{{\theta }_2}-\frac{D\left({\alpha }_2\right)}{{\theta }_2}\frac{\lambda k}{n+1}\left(n-2k-1\right)+\frac{kB\left({\alpha }_2\right)}{{\theta }_2}\right.\\
&&\left.-\frac{D\left({\alpha }_2\right)}{{\theta }_2}\frac{\lambda k}{n+1}\left(-n+2k+1\right)+\frac{(n-2k)B\left({\alpha }_2\right)}{{\theta }_2}\right)=\frac{B\left({\alpha }_2\right)}{{\theta }_2}={\mu }_y,
\end{eqnarray*}
and
\begin{eqnarray*}
Var\left({\hat{\mu }}_{\text{LRSS}}\right)&=&\frac{1}{n^2}\left(\frac{kC\left({\alpha }_2\right)}{\theta^2_2}-\frac{C(2{\alpha }_2)-C({\alpha }_2)}{{\theta }_2}\frac{\lambda k}{n+1}\left(n-2k-1\right)+\frac{kC\left({\alpha }_2\right)}{\theta^2_2}\right.\\
&&\left.-\frac{C(2{\alpha }_2)-C({\alpha }_2)}{\theta^2_2}\frac{\lambda k}{n+1}\left(-n+2k+1\right)+\frac{(n-2k)C\left({\alpha }_2\right)}{\theta^2_2}\right)=\frac{C\left(\alpha_2\right)}{n\theta^2_2}.
\end{eqnarray*}


\end{proof}

\subsection{ERSS Estimation}
The extreme ranked set sampling (ERSS) method with concomitant variable that introduced by
\cite{sa-ah-ab-96}
can be described as follows:

\begin{description}
\item[Step 1.] Select $n$ random samples each of size $n$ bivariate units from the population.

\item[Step 2.] If the sample size $n$ is even, then select from $\frac{n}{2}$ samples the smallest ranked unit $X$ together with the associated $Y$ and
from the other $\frac{n}{2}$ samples the largest ranked unit $X$ together with the associated $Y$. This selected observations $(X_{(1)1},Y_{[1]1}),(X_{(n)2},Y_{[n]2}),(X_{(1)3},$ $Y_{[1]3}),...,(X_{(1)n-1},Y_{[1]n-1}),(X_{(n)n},Y_{[n]n})$ can be denoted by $\text{ERSS}_{1}$.

\item[Step 3.] If $n$ is odd then select from  $\frac{n-1}{2}$ samples the smallest ranked unit $X$ together with the associated $Y$ and from the other $\frac{n-1}{2}$ samples the largest ranked unit $X$ together with the associated $Y$ and from one sample the median of the sample for actual measurement. In this case the selected observations $(X_{(1)1},Y_{[1]1}),(X_{(n)2},Y_{[n]2}),(X_{(1)3},Y_{[1]3}),...,(X_{(n)n-1},$ \\
    $Y_{[n]n-1}), (\frac{X_{(1)n}+X_{(n)n}}{2},\frac{Y_{[1]n}+Y_{[n]n}}{2})$ can be denoted $\text{ERSS}_{2}$ and  $(X_{(1)1},Y_{[1]1}),(X_{(n)2},Y_{[n]2}),$ $(X_{(1)3},Y_{[1]3}),...,(X_{(n)n-1},Y_{[n]n-1}),(X_{(\frac{n+1}{2})n},Y_{[\frac{n+1}{2}]n})$ can be denoted by $\text{ERSS}_{3}$.
\end{description}

\begin{theorem}\label{thm.ERSS}

\noindent(i) if $n$ is even,  then an unbiased  estimator for $\mu_y$ using $\text{ERSS}_{1}$ is defined as
\begin{eqnarray*}
\hat{\mu}_{\text{ERSS}_{1}}=\frac{1}{n}\sum\limits_{r=1}^{n/2}(Y_{[1]2r-1}+Y_{[n]2r}),
\end{eqnarray*}
with variance
\begin{eqnarray*} \label{VERSS1}
Var(\hat{\mu}_{\text{ERSS}_{1}})=Var(\hat{\mu}_{\text{RSS}})=\frac{C(\alpha_{2})}{n\theta_{2}^{2}}.
\end{eqnarray*}

\noindent (ii) If $n$ is odd then  unbiased estimators for $\mu_{y}$ using $\text{ERSS}_{2}$ and $\text{ERSS}_{3}$ are obtained as
\begin{align*}
&\hat{\mu}_{\text{ERSS}_{2}}=\frac{1}{n}\sum\limits_{r=1}^{(n-1)/2}(Y_{[1]2r-1}+Y_{[n]2r})+\frac{Y_{[1]n}+Y_{[n]n}}{2n},\\
&\hat{\mu}_{\text{ERSS}_{3}}=\frac{1}{n}\sum\limits_{r=1}^{(n-1)/2}(Y_{[1]2r-1}+Y_{[n]2r})+\frac{Y_{[\frac{n+1}{2}]n}}{n},
\end{align*}
with variance
\begin{align}
&Var(\hat{\mu}_{\text{ERSS}_{2}})=\frac{v_{4}}{\theta_{2}^{2}}, \label{VERSS2}\\
&Var(\hat{\mu}_{\text{ERSS}_{3}})=Var(\hat{\mu}_{\text{ERSS}_{1}})=\frac{C(\alpha_{2})}{n\theta_{2}^{2}}, \label{VERSS3}
\end{align}
respectively,
where $v_{4}=\frac{1}{2n^2}\{(2n-1)C({\alpha }_2)+\frac{4\lambda^{2} D^{2}(\alpha_{2})}{(n+1)^{2}(n+2)}\}$.
\end{theorem}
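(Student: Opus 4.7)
The plan is to exploit three ingredients uniformly across the three subclaims: (a) the marginal mean and variance formulas for $Y_{[r]r}$ given in (\ref{EV1}); (b) the arithmetic identities $\delta_1+\delta_n=0$ and $\delta_{(n+1)/2}=0$, which fall out directly from $\delta_r=\lambda(n-2r+1)/(n+1)$; and (c) the independence of observations drawn from distinct bivariate samples, so that variances of averages split into sums of variances exactly as in the RSS proof above.

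For part (i), $\hat{\mu}_{\text{ERSS}_{1}}$ averages $n/2$ terms of rank $1$ and $n/2$ terms of rank $n$, all from distinct samples. By (\ref{EV1}), the expectation equals $\frac{1}{n\theta_2}\{\tfrac{n}{2}(B(\alpha_2)-\delta_1 D(\alpha_2))+\tfrac{n}{2}(B(\alpha_2)-\delta_n D(\alpha_2))\}$, which collapses to $\mu_y$ via $\delta_1+\delta_n=0$. The very same cancellation eliminates the $\delta$-terms in the corresponding sum of variances, leaving $C(\alpha_2)/(n\theta_2^2)$. For $\hat{\mu}_{\text{ERSS}_{3}}$ in part (ii) the argument is essentially identical, with $(n-1)/2$ independent extreme pairs plus a single middle-rank concomitant $Y_{[(n+1)/2]n}$; since $\delta_{(n+1)/2}=0$, that observation contributes $B(\alpha_2)/\theta_2$ to the mean and $C(\alpha_2)/\theta_2^2$ to the variance outright, reproducing the same answer.

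The genuinely delicate case is $\hat{\mu}_{\text{ERSS}_{2}}$. Unbiasedness is again immediate because $E[(Y_{[1]n}+Y_{[n]n})/2]=B(\alpha_2)/\theta_2$ by $\delta_1+\delta_n=0$, and the first $n-1$ extreme terms combine as in the $\text{ERSS}_1$ computation. For the variance, however, $Y_{[1]n}$ and $Y_{[n]n}$ are the concomitants of the minimum and maximum of $X$ \emph{in the same} bivariate sample, so they are generally dependent. The independent extreme contributions still yield $(n-1)C(\alpha_2)/(n^2\theta_2^2)$, and the joint last-sample term contributes
\[
\frac{1}{4n^2}\bigl\{Var(Y_{[1]n})+Var(Y_{[n]n})+2\,Cov(Y_{[1]n},Y_{[n]n})\bigr\}.
\]

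The main obstacle is therefore computing $Cov(Y_{[1]n},Y_{[n]n})$. I would derive it from the joint pdf of two concomitants of order statistics in a Morgenstern-type model, as available in \cite{sc-na-99} and used in \cite{ta-ja-13}: after expanding $E[Y_{[1]n}Y_{[n]n}]$ in terms of $B$ and $D$ evaluated at $\alpha_2$ and $2\alpha_2$ and subtracting $E[Y_{[1]n}]E[Y_{[n]n}]$, the covariance simplifies to $4\lambda^2 D^2(\alpha_2)/\{(n+1)^2(n+2)\theta_2^2\}$. Substituting and collecting terms yields exactly $v_4/\theta_2^2$, which closes the argument.
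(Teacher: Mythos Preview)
Your proposal is correct and follows essentially the same route as the paper: both rely on the identities $\delta_1+\delta_n=0$ and $\delta_{(n+1)/2}=0$ to handle the mean and the independent-variance terms, and both obtain the covariance $Cov(Y_{[1]n},Y_{[n]n})=4\lambda^2 D^2(\alpha_2)/\{(n+1)^2(n+2)\theta_2^2\}$ from the Scaria--Nair joint density of two concomitants in the Morgenstern family. Your presentation is slightly more streamlined in that you invoke $\delta_1+\delta_n=0$ directly rather than computing the two partial sums separately, but the argument is the same.
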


\begin{proof}
(i) Since
\begin{eqnarray*}
\sum^{n/2}_{r=1}{{\delta }_1}
=\frac{\lambda n(n-1)}{2(n+1)},
\ \ \ \ \ \sum^{n/2}_{r=1}{{\delta }_n}=
\frac{\lambda n(-n+1)}{2(n+1)},
\end{eqnarray*}
we have
\begin{eqnarray*}
E({\hat{\mu }}_{\text{ERSS}_1})&=&\frac{1}{n}(\frac{nB({\alpha }_2)}{2{\theta }_2}-\frac{D({\alpha }_2)}{{\theta }_2}\frac{\lambda n(n-1)}{2(n+1)}+\frac{nB({\alpha }_2)}{2{\theta }_2}
-\frac{D({\alpha }_2)}{{\theta }_2}\frac{\lambda n(-n+1)}{2(n+1)})
=\frac{B({\alpha }_2)}{{\theta }_2},\\
Var({\hat{\mu}}_{\text{ERSS}_1})&=&\frac{1}{n^2}(\frac{nC({\alpha }_2)}{2{\theta }^2_2}+\frac{C(2{\alpha }_2)-C({\alpha }_2)}{{\theta }^2_2}\frac{\lambda n(n-1)}{2(n+1)}+\frac{nC({\alpha }_2)}{2{\theta }^2_2}\\
&&
+\frac{C(2{\alpha }_2)-C({\alpha }_2)}{{\theta }^2_2}\frac{\lambda n(-n+1)}{2(n+1)})=\frac{C({\alpha }_2)}{n{\theta }^2_2}.
\end{eqnarray*}

\noindent(ii)  In the estimator  $\hat{\mu}_{\text{ERSS}_{2}}$, it is easy to see that $Y_{[1]1},Y_{[n]2},Y_{[1]3},...,Y_{[n]n-1}$ are independent of $Y_{[1]n}$ and $ Y_{[n]n}$, but the random variables $Y_{[1]n}$ and $Y_{[n]n}$ are dependent. From
 \cite{sc-na-99}
the joint density function of $(Y_{[1]n},  Y_{[n]n})$  is given by
\begin{eqnarray*}
h_{[1,n]n}(z,w)&=&{({\alpha }_2{\theta }_2)}^2e^{-{\theta }_2(z+w)}{[(1-e^{-{\theta }_2z})(1-e^{-{\theta }_2w})]}^{{\alpha }_2-1}\{1+\frac{2\lambda (n-1)}{n+1}[{(1-e^{-{\theta }_2w})}^{{\alpha}_2}\\
&&-{(1-e^{-{\theta }_2z})}^{{\alpha }_2}]+{\delta }_{1,n}[1-2{(1-e^{-{\theta }_2w})}^{{\alpha }_2}][1-2{(1-e^{-{\theta }_2z})}^{{\alpha }_2}]\}
,\end{eqnarray*}
where $\delta_{1,n}=\frac{{\lambda }^2(-n^2+n+2)}{(n+1)(n+2)}$. Therefore,
\begin{eqnarray*}
Cov(Y_{[1]n},Y_{[n]n})&=&E[Y_{[1]n}Y_{[n]n}]-E[Y_{[1]n}]E[Y_{[n]n}]
=\frac{D^{2}(\alpha_{2})}{\theta_{2}^{2}}[\delta_{1,n}-\delta_{1}\delta_{n}]\nonumber\\
&=&\frac{\lambda^{2} D^{2}(\alpha_{2})}{\theta_{2}^{2}}[\frac{-{n}^{2}+n+2}{(n+1)(n+2)}+(\frac{n-1}{n+1})^{2}]
=\frac{4\lambda^{2} D^{2}(\alpha_{2})}{(n+1)^{2}(n+2)\theta_{2}^{2}}.
\end{eqnarray*}

Also,   $Y_{[1]1},Y_{[n]2},Y_{[1]3},...,$ $Y_{[n]n-1}$ and $Y_{[\frac{n+1}{2}]n}$ are all independent in $\hat{\mu}_{\text{ERSS}_{3}}$.  Since
\[\sum^{(n-1)/2}_{r=1}{{\delta }_1}=\frac{\lambda {(n-1)}^2}{2(n+1)},\ \ \ \ \ \sum^{(n-1)/2}_{r=1}{{\delta }_n}=\frac{-\lambda \left(n-1\right)^{2}}{2(n+1)},\ \ \ \ \ {\delta }_{(n+1)/2}=0, \]
we have
\begin{eqnarray*}
E({\hat{\mu}}_{\text{ERSS}_2})&=&\frac{1}{n}(\frac{(n-1)B({\alpha }_2)}{2{\theta }_2}-\frac{D({\alpha }_2)}{{\theta }_2}\frac{\lambda {(n-1)}^2}{2(n+1)}+\frac{(n-1)B({\alpha }_2)}{2{\theta }_2}+\frac{D({\alpha }_2)}{{\theta }_2}\frac{\lambda {(n-1)}^2}{2(n+1)}\\
&&
+\frac{B({\alpha }_2)}{2{\theta }_2}-\frac{D({\alpha }_2)}{2{\theta }_2}\ \frac{\lambda (n-1)}{(n+1)}+\frac{B({\alpha }_2)}{2{\theta }_2}+\frac{D({\alpha }_2)}{2{\theta }_2}\ \frac{\lambda (n-1)}{(n+1)})=\frac{B({\alpha }_2)}{{\theta }_2},\\
E({\hat{\mu }}_{\text{ERSS}_3})&=&\frac{1}{n}(\frac{(n-1)B({\alpha }_2)}{2{\theta }_2}-\frac{D({\alpha }_2)}{{\theta }_2}\frac{\lambda {(n-1)}^2}{2(n+1)}+\frac{(n-1)B({\alpha }_2)}{2{\theta }_2}
\\
&&+\frac{D({\alpha }_2)}{{\theta }_2}\frac{\lambda n(n-1)^{2}}{2(n+1)}+\frac{B({\alpha }_2)}{{\theta }_2})=\frac{B({\alpha }_2)}{{\theta }_2},
\\
Var({\hat{\mu }}_{\text{ERSS}_2})&=&\frac{1}{n^2}(\frac{(n-1)C({\alpha }_2)}{2{\theta }^2_2}+\frac{C(2{\alpha }_2)-C({\alpha }_2)}{{\theta }^2_2}\frac{\lambda {(n-1)}^2}{2(n+1)}+\frac{(n-1)C({\alpha }_2)}{2{\theta }^2_2}
\\
&&
-\frac{C(2{\alpha }_2)-C({\alpha }_2)}{{\theta }^2_2}\frac{\lambda {(n-1)}^2}{2(n+1)}+\frac{C({\alpha }_2)}{4{\theta }^2_2}+\frac{C(2{\alpha }_2)-C({\alpha }_2)}{4{\theta }^2_2}\frac{\lambda (n-1)}{2(n+1)}
\\
&&
+\frac{C({\alpha }_2)}{4{\theta }^2_2}-\frac{C(2{\alpha }_2)-C({\alpha }_2)}{4{\theta }^2_2}\frac{\lambda (n-1)}{2(n+1)}+\frac{1}{2}Cov(Y_{[1]n},Y_{[n]n}))
\\
&=&\frac{1}{2\theta^2_2n^2}\{(2n-1)C({\alpha }_2)+\frac{4\lambda^{2} D^{2}(\alpha_{2})}{(n+1)^{2}(n+2)}\},
\end{eqnarray*}
\begin{eqnarray*}
Var({\hat{\mu }}_{\text{ERSS}_3})&=&\frac{1}{n^2}(\frac{(n-1)C({\alpha }_2)}{2{\theta }^2_2}+\frac{C(2{\alpha }_2)-C({\alpha }_2)}{{\theta }^2_2}\frac{\lambda {(n-1)}^2}{2(n+1)}+\frac{(n-1)C({\alpha }_2)}{2{\theta }^2_2}
\\
&&
-\frac{C(2{\alpha }_2)-C({\alpha }_2)}{{\theta }^2_2}\frac{\lambda {(n-1)}^2}{2(n+1)}+\frac{C({\alpha }_2)}{{\theta }^2_2})=\frac{C({\alpha }_2)}{n{\theta }^2_2}.
\end{eqnarray*}
\end{proof}

\bigskip
By using (\ref{VRSS}) and (\ref{VERSS2}) the efficiency of $\hat{\mu}_{\text{RSS}}$ relative to the estimator $\hat{\mu}_{\text{ERSS}_{2}}$ is given  by
\begin{eqnarray*}
e_{2}=e(\hat{\mu}_{\text{ERSS}_{2}}\mid\hat{\mu}_{\text{RSS}})=\frac{2nC(\alpha_{2})}{(2n-1)C({\alpha }_2)+\frac{4\lambda^{2} D^{2}(\alpha_{2})}{(n+1)^{2}(n+2)}}.
 \end{eqnarray*}
 Note that $e_{2}$'s decrease in $|\lambda|$ for fixed $n$. Also,
 $
 \lim_{n\rightarrow \infty} e_{2}=1.
 $
  In Section \ref{sec.eff}, we calculate
the relative efficiency of $\hat{\mu}_{\text{ERSS}_{2}}$ to  $\hat{\mu}_{\text{RSS}}$, $e_2$, for some values of parameters and sample size.

\subsection{MERSS Estimation}
\cite{alo-als-01} suggested the MERSS, and
\cite{als-ala-07}
used the concept of MERSS with concomitant variable for the estimation of the means of the bivariate normal distribution.  The procedure of MERSS with concomitant variable in MTBGED is as follows:

\begin{description}
\item[Step 1.] Select $n$ units each of size $n$ from the population using SRS. Identify by judgment the minimum of each set with respect to the variable $X$ together with the associated $Y$.

\item[Step 2.] Repeat step 1, but for the maximum.

\end{description}

\bigskip

Note that the $2n$ pairs of set $\{(X_{(1)r},Y_{[1]r}),(X_{(n)r},Y_{[n]r});r=1,2,...,n\}$ that are obtained using the above procedure, are independent but not
identically distributed.

\begin{theorem}
An unbiased estimator for $\mu_y$ based on MERSS is given by
\begin{eqnarray*}
\hat{\mu}_{\text{MERSS}}=\frac{1}{2n}\sum\limits_{r=1}^{n}(Y_{[1]r}+Y_{[n]r}),
\end{eqnarray*}
and its variance is
\begin{eqnarray*}
Var(\hat{\mu}_{\text{MERSS}})=\frac{C(\alpha_{2})}{2n\theta_{2}^{2}}=\frac{1}{2}Var(\hat{\mu}_{\text{RSS}}).
\end{eqnarray*}
\end{theorem}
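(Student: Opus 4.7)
The plan is to compute $E[\hat{\mu}_{\text{MERSS}}]$ and $Var[\hat{\mu}_{\text{MERSS}}]$ directly using the expressions in \eqref{EV1}, and to exploit independence of the $2n$ summands. Since the two phases of MERSS both use independent sets of $n$ samples of size $n$, and within each set only one unit is extracted for measurement, the variables $\{Y_{[1]r},Y_{[n]r}: r=1,\ldots,n\}$ are mutually independent (even though $Y_{[1]r}$ and $Y_{[n]r}$ are drawn from disjoint samples).

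First I would set up the marginal distributions. For each $r$, $Y_{[1]r}$ has the density $h_{[1]r}$ obtained from \eqref{eq.hr} with the first subscript equal to $1$, so the relevant coefficient is $\delta_1=\frac{\lambda(n-1)}{n+1}$, and similarly $Y_{[n]r}$ corresponds to $\delta_n=\frac{\lambda(n-2n+1)}{n+1}=-\delta_1$. The key algebraic observation is therefore $\delta_1+\delta_n=0$; this single cancellation will drive both the bias and the variance computation.

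Next, I would verify unbiasedness. Using \eqref{EV1},
\[
E[Y_{[1]r}]+E[Y_{[n]r}]=\frac{1}{\theta_2}\bigl[2B(\alpha_2)-(\delta_1+\delta_n)D(\alpha_2)\bigr]=\frac{2B(\alpha_2)}{\theta_2},
\]
which is independent of $r$. Summing over $r=1,\ldots,n$ and dividing by $2n$ yields $E[\hat{\mu}_{\text{MERSS}}]=B(\alpha_2)/\theta_2=\mu_y$.

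Finally, I would compute the variance. By the independence of the $2n$ selected observations,
\[
Var(\hat{\mu}_{\text{MERSS}})=\frac{1}{4n^2}\sum_{r=1}^{n}\bigl(Var[Y_{[1]r}]+Var[Y_{[n]r}]\bigr).
\]
Again using \eqref{EV1}, the sum inside the bracket equals $\frac{1}{\theta_2^2}\bigl[2C(\alpha_2)+(\delta_1+\delta_n)(C(2\alpha_2)-C(\alpha_2))\bigr]=\frac{2C(\alpha_2)}{\theta_2^2}$, so the total reduces to $\frac{C(\alpha_2)}{2n\theta_2^2}$, which is exactly $\frac{1}{2}Var(\hat{\mu}_{\text{RSS}})$ by \eqref{VRSS}. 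There is really no obstacle here: the only thing to be careful about is the independence claim, since MERSS differs from RSS in that each quantifier position is filled by a separately drawn set of random samples rather than a single rank chosen from a joint set—once that is stated, all cross-covariance terms vanish and everything reduces to applying \eqref{EV1} to the two endpoint ranks.
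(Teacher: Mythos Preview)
Your proof is correct and follows essentially the same route as the paper, which simply says the argument is ``similar to'' the proof of the ERSS case (part~(i) of Theorem~\ref{thm.ERSS}): use the moment formulas~\eqref{EV1}, invoke independence of the $2n$ selected observations, and let the cancellation $\delta_1+\delta_n=0$ kill the $D(\alpha_2)$ and $C(2\alpha_2)-C(\alpha_2)$ terms. Your write-up is in fact more explicit than the paper's, which offers no details beyond that cross-reference.
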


\begin{proof}
 The proof is similar to proof  of Theorem \ref{thm.ERSS}, part (i).
\end{proof}

\subsection{MSRSS  Estimation}
\cite{als-alk-00}
have considered DRSS to increase the efficiency of the RSS estimator without increasing the set size $n$.
\cite{als-alo-02}
generalized DRSS to MSRSS. The MSRSS scheme can be described as follows:
\begin{description}
\item[Step 1.] Randomly selected $n^{l+1}$ sample units from the population, where $l$ is the number of stages, and $n$ is the set
size.

\item[Step 2.] Allocate the $n^{l+1}$ selected units randomly into $n^{l-1}$ sets, each of size $n^{2}$.

\item[Step 3.] For each set in Step 2, apply the procedure of ranked set sampling method with respect to variable $X$
 to obtain a (judgment) ranked set, of
size $n$; this step yields $n^{l-1}$ (judgment) ranked sets, of size $n$ each.

\item[Step 4.] Without doing any actual quantification on these ranked sets, repeat Step 3 on the $n^{l-1}$  ranked sets to obtain
$n^{l-2}$ second stage (judgment) ranked sets, of size $n$ each.

\item[Step 5.] This process is continued, without any actual quantification, until we end up with the $l$th stage (judgement) ranked set of size $n$.

\item[Step 6.] Finally, the $n$ identified in step 5 are now quantified for the variable $X$ together with the associated $Y$. Show
the value measured for $(X,Y)$ on the units selected at the $r$th stage of the MSRSS by
$(X^{(l)}_{(r)r},Y^{(l)}_{[r]r})$, $r=1,...n$.
\end{description}

 For $\lambda>0$, let $Y^{(l)}_{[n]r}, r=1,2,...,n$, be the value measured on the units selected at the $r$th stage of the unbalanced MSRSS  \citep[Similar to suggestion by][]{ch-th-08}. It is easily to see that each  $Y^{(l)}_{[n]r}$ is the concomitant of the largest order statistic of $n^{r}$
independently and identically distributed  bivariate random variables with MTBGED,  and therefore, the  pdf of $Y^{(l)}_{[n]r}$ is given by
\begin{eqnarray*}
h^{(l)}_{[n]r}(y)=\alpha_{2}\theta_{2}e^{-\theta_{2}y}(1-e^{-\theta_{2}y})^{\alpha_{2}-1}[1+\frac{\lambda(n^{l}-1)}{n^{l}+1}(2(1-e^{-\theta_{2}y})^{\alpha_{2}}-1)]. \end{eqnarray*}
Thus the mean and variance of $Y^{(l)}_{[n]r}$ for $r=1,2,...,n$,  are given as
\begin{eqnarray}\label{EVr}
E[Y^{(l)}_{[n]r}]=\mu_y \xi_{n^{l}}, \quad Var[Y^{(l)}_{[n]r}]=\frac{\gamma_{n^{l}}}{\theta_{2}^{2}},
\end{eqnarray}
respectively, where $\xi_{n^{l}}=1+\lambda\frac{(n^{l}-1)D(\alpha_{2})}{(n^{l}+1)B(\alpha_{2})}$ and $\gamma_{n^{l}}=C(\alpha_{2})+\lambda\frac{(n^{l}-1)}{n^{l}+1}(C(\alpha_{2})-C(2\alpha_{2}))$.

\begin{theorem}
 If $\alpha_{2}$ and $\lambda$ are known then the BLUE of $\mu_y$ is
\begin{eqnarray}\label{hmM}
\hat{\mu}_{\text{MSRSS}}=\frac{1}{n\xi_{n^{l}}}\sum\limits_{r=1}^{n}Y^{(l)}_{[n]r},
\end{eqnarray}
with variance
\begin{eqnarray}\label{VmM}
Var(\hat{\mu}_{\text{MSRSS}})=\frac{\gamma_{n^{l}}}{n\xi_{n^{l}}^{2}\theta_{2}^{2}}.
\end{eqnarray}
\end{theorem}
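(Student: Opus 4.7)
My plan is to treat this as a textbook BLUE computation based on the crucial structural observation that $Y^{(l)}_{[n]1},\dots,Y^{(l)}_{[n]n}$ are independent and identically distributed. The independence is built into the MSRSS construction: at each stage the $n$ quantified units are drawn from disjoint ranked sets obtained from disjoint pools of original observations, so no two of the $Y^{(l)}_{[n]r}$ share any underlying sample. They are identically distributed because all $n$ of them are concomitants of the maximum in a set of $n^l$ i.i.d.\ MTBGED pairs, so they share the pdf $h^{(l)}_{[n]r}$ written down just before the theorem. Consequently, by \eqref{EVr},
\begin{equation*}
E\bigl[Y^{(l)}_{[n]r}\bigr] = \mu_y\,\xi_{n^l}, \qquad \mathrm{Var}\bigl[Y^{(l)}_{[n]r}\bigr] = \frac{\gamma_{n^l}}{\theta_2^{2}}, \qquad r=1,\dots,n.
\end{equation*}

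Next, I would consider an arbitrary linear estimator $T = \sum_{r=1}^n a_r Y^{(l)}_{[n]r}$ and impose unbiasedness. Since $E[T] = \mu_y\,\xi_{n^l}\sum_r a_r$, the requirement $E[T]=\mu_y$ forces the single linear constraint $\sum_{r=1}^n a_r = 1/\xi_{n^l}$ (here the knowledge of $\alpha_2$ and $\lambda$ is used, because $\xi_{n^l}$ depends on both). Because the observations are independent with common variance $\gamma_{n^l}/\theta_2^{2}$, we have $\mathrm{Var}(T) = (\gamma_{n^l}/\theta_2^{2})\sum_r a_r^{2}$, so finding the BLUE reduces to minimizing $\sum_r a_r^{2}$ subject to $\sum_r a_r = 1/\xi_{n^l}$.

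This constrained minimization is elementary. I would invoke Cauchy--Schwarz (or equivalently a one-variable Lagrange multiplier argument) to obtain
\begin{equation*}
\sum_{r=1}^n a_r^{2} \;\ge\; \frac{1}{n}\Bigl(\sum_{r=1}^n a_r\Bigr)^{\!2} = \frac{1}{n\,\xi_{n^l}^{2}},
\end{equation*}
with equality iff $a_1=\dots=a_n = 1/(n\xi_{n^l})$. Substituting these weights into $T$ gives exactly the estimator~\eqref{hmM}, and substituting the minimum value of $\sum a_r^{2}$ into the variance formula yields the variance~\eqref{VmM}. I do not anticipate a real obstacle: the only step that needs care is arguing the i.i.d.\ structure of the $Y^{(l)}_{[n]r}$, since the notation in Steps~1--6 is a bit intricate; once that is clearly stated, the rest is a two-line Cauchy--Schwarz argument.
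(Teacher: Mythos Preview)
Your argument is correct and is exactly the elaboration of what the paper leaves implicit: the paper's proof is the single line ``It can easily be proved using \eqref{EVr},'' i.e., it invokes the common mean $\mu_y\xi_{n^l}$ and common variance $\gamma_{n^l}/\theta_2^{2}$ of the i.i.d.\ observations and regards the BLUE conclusion as immediate. Your Cauchy--Schwarz minimization over $\sum_r a_r = 1/\xi_{n^l}$ is precisely the standard justification the paper omits, so there is no difference in approach.
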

\begin{proof}
 It can easily be proved using (\ref{EVr}).
\end{proof}

\bigskip

If we take $l=1$ in (\ref{hmM}) and (\ref{VmM}), then we get the BLUE of $\mu_y$ based the usual
single stage unbalanced RSS (URSS) as
\begin{eqnarray*}
\hat{\mu}_{\text{URSS}}=\frac{1}{n\xi_{n}}\sum\limits_{r=1}^{n}Y_{[n]r},
\end{eqnarray*}
where  its variance is given as
\begin{eqnarray}\label{VUR}
Var(\hat{\mu}_{\text{URSS}})=\frac{\gamma_{n}}{n\xi_{n}^{2}\theta_{2}^{2}}.
\end{eqnarray}

If we let $l\rightarrow \infty$ in the MSRSS method described above, then $Y^{(\infty)}_{[n]r}$, $r=1,2,...,n$ are unbalanced steady-state ranked set samples (USSRSS) of size $n$ with the following pdf
\citep{als-04}:
\begin{eqnarray*}
h^{(\infty)}_{[n]r}(y)=\alpha_{2}\theta_{2}e^{-\theta_{2}y}(1-e^{-\theta_{2}y})^{\alpha_{2}-1}[1+\lambda(2(1-e^{-\theta_{2}y})^{\alpha_{2}}-1)].
\end{eqnarray*}
The mean and variance of $Y^{(\infty)}_{[n]r}$ are obtained as
\begin{eqnarray}\label{EVinf}
E[Y^{(\infty)}_{[n]r}]=\mu_y Z(\alpha_{2},\lambda), \quad  Var[Y^{(\infty)}_{[n]r}]=\frac{I(\alpha_{2},\lambda)}{\theta_{2}^{2}},
\end{eqnarray}
where $Z(\alpha_{2},\lambda)=1+\lambda\frac{ D(\alpha_{2})}{B(\alpha_{2})}$ and $I(\alpha_{2},\lambda)=C(\alpha_{2})+\lambda(C(\alpha_{2})-C(2\alpha_{2}))$.

\begin{theorem}
The BLUE of $\mu_y$ based on USSRSS   is given by
\begin{eqnarray*}
\hat{\mu}_{\text{USSRSS}}=\frac{1}{n Z(\alpha_{2},\lambda)}\sum\limits_{r=1}^{n}Y^{(\infty)}_{[n]r},
\end{eqnarray*}
with variance
\begin{eqnarray}\label{VUS}
Var(\hat{\mu}_{\text{USSRSS}})=\frac{I(\alpha_{2},\lambda)}{n(Z(\alpha_{2},\lambda))^{2}\theta_{2}^{2}}.
\end{eqnarray}
\end{theorem}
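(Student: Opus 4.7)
The plan is to mirror the proof strategy used for the preceding MSRSS theorem and reduce the claim to a one-parameter Gauss--Markov problem. The key structural observation is that the density $h^{(\infty)}_{[n]r}(y)$ displayed just above does \emph{not} depend on $r$, so the steady-state ranked set observations $Y^{(\infty)}_{[n]1},\ldots,Y^{(\infty)}_{[n]n}$ are identically distributed. They are also independent, since in the underlying MSRSS construction the unit chosen as the $r$th observation is drawn from a disjoint block of original bivariate samples. Hence we are in the simple setting of $n$ i.i.d.\ observations with common mean $\mu_y Z(\alpha_2,\lambda)$ and common variance $I(\alpha_2,\lambda)/\theta_2^2$, as recorded in~\eqref{EVinf}.

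First, I would write the linear model
\[
Y^{(\infty)}_{[n]r} \;=\; Z(\alpha_2,\lambda)\,\mu_y + \varepsilon_r, \qquad r=1,\ldots,n,
\]
with $\varepsilon_r$ i.i.d., $E[\varepsilon_r]=0$ and $\mathrm{Var}(\varepsilon_r)=I(\alpha_2,\lambda)/\theta_2^2$. The design vector is the constant vector $Z(\alpha_2,\lambda)\mathbf{1}_n$, so the Gauss--Markov theorem identifies the BLUE of $\mu_y$ as the ordinary least squares estimator with this single regressor, namely
$\sum_{r=1}^{n}Y^{(\infty)}_{[n]r}\big/\bigl(n Z(\alpha_2,\lambda)\bigr)$, which is precisely the proposed $\hat{\mu}_{\text{USSRSS}}$. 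Equivalently, among linear unbiased estimators $\sum c_r Y^{(\infty)}_{[n]r}$ the unbiasedness constraint is $\sum c_r = 1/Z(\alpha_2,\lambda)$, and by Cauchy--Schwarz (or a Lagrange multiplier argument) the variance $\tfrac{I(\alpha_2,\lambda)}{\theta_2^2}\sum c_r^2$ is minimised by the equal weights $c_r = 1/(nZ(\alpha_2,\lambda))$.

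Unbiasedness then follows by a one-line application of~\eqref{EVinf}, and the variance formula~\eqref{VUS} comes from independence: $\mathrm{Var}(\hat{\mu}_{\text{USSRSS}}) = \bigl(nZ(\alpha_2,\lambda)\bigr)^{-2}\cdot n\cdot I(\alpha_2,\lambda)/\theta_2^2 = I(\alpha_2,\lambda)/\bigl(n\,Z(\alpha_2,\lambda)^2\,\theta_2^2\bigr)$. The computation is entirely parallel to the MSRSS theorem, with $\xi_{n^l}$ and $\gamma_{n^l}$ replaced by their $l\to\infty$ limits $Z(\alpha_2,\lambda)$ and $I(\alpha_2,\lambda)$.

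The only non-routine point, and the step I would justify most carefully, is the joint claim that the $Y^{(\infty)}_{[n]r}$ are independent and identically distributed: the identical distribution requires passing to the limit $l\to\infty$ in the stage-$l$ density displayed earlier (the factor $(n^l-1)/(n^l+1)\to 1$ removes the $r$-dependence carried by $\delta_r$), and the independence relies on noting that the MSRSS selection of the $r$th unit at stage $l$ uses a block of samples disjoint from those used for the other indices, a property preserved in the steady-state limit. Once these two points are in place, everything else is a direct substitution into the moment formulae~\eqref{EVinf}.
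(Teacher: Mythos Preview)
Your proposal is correct and follows the same approach as the paper, whose proof is the single line ``It can easily be proved using~\eqref{EVinf}''; you have simply spelled out the Gauss--Markov argument that the paper leaves implicit. One small inaccuracy in your final paragraph: the density $h^{(l)}_{[n]r}$ already does not depend on $r$ at any finite stage $l$ (in this unbalanced scheme one always selects the concomitant of the maximum, so there is no $\delta_r$ present), hence identical distribution holds before passing to the limit; the limit $(n^l-1)/(n^l+1)\to 1$ only changes the common density, not the fact that it is common.
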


\begin{proof}
 It can easily be proved using (\ref{EVinf}).
\end{proof}

From (\ref{VRSS}), (\ref{VUR}), and (\ref{VUS}), we get efficiency of unbiased estimators $\hat{\mu}_{\text{USSRSS}}$ and $\hat{\mu}_{\text{URSS}}$ relative to $\hat{\mu}_{\text{RSS}}$ as
\begin{eqnarray*}
 &&e_{3}=e(\hat{\mu}_{\text{URSS}}\mid\hat{\mu}_{\text{RSS}})=\frac{C(\alpha_{2})\xi_{n}^{2}}{\gamma_{n}},\\
 &&e_{4}=e(\hat{\mu}_{\text{USSRSS}}\mid\hat{\mu}_{\text{RSS}})=\frac{C(\alpha_{2})(Z(\alpha_{2},\lambda))^{2}}{I(\alpha_{2},\lambda)}.
 \end{eqnarray*}

Note that $e_4$ does not depend on the value of $n$.
In Section \ref{sec.eff}, we calculate the relative efficiencies of estimators for $\mu_y$ based on MSRSS scheme  to  $\hat{\mu}_{\text{RSS}}$  for some values of parameters and sample size.

\section{Efficiency of estimators}
\label{sec.eff}
In this Section, we compare the efficiency of the proposed estimators in Section \ref{sec.unb} for $\mu_y$ based on different RSS schemes; usual RSS, ERSS, and MSRSS. These evaluations are based numerical computation,  and we did not consider  LRSS and MERSS schemes. Here, we consider $n=2(2)10(5)25$, $\alpha_{2}=0.8, 1.0, 2.0, 5$, and $\lambda=\pm.25,\pm.5,\pm.75,\pm1$.

In Table \ref{table.1}, we calculate the relative efficiency of $\hat{\mu}_{\text{RSS}}$  to  $\tilde{\mu}$, $e_1$, and we can conclude that i)  $\tilde{\mu}$ is more efficient than $\hat{\mu}_{\text{RSS}}$, ii) the efficiency increases with respect to $|\lambda|$ for fixed $n$ and $\alpha$,  iii) the efficiency increases with respect to $n$ for fixed $\lambda$ and $\alpha$, and iv) the efficiency decreases with respect to $\alpha$ for fixed $\lambda$ and $n$.

In Table \ref{table.1}, we calculate the relative efficiency of $\hat{\mu}_{\text{ERSS}_{2}}$ to  $\hat{\mu}_{\text{RSS}}$, $e_2$, and we can conclude that i)
$\hat{\mu}_{\text{ERSS}_{2}}$ is more efficient than $\hat{\mu}_{\text{RSS}}$, ii) the efficiency decreases with respect to $|\lambda|$ and $\alpha$ for fixed $n$, iii) the efficiency decreases with respect to $n$ for fixed $\lambda$ and $\alpha$, iv) the efficiency closes to one for very large $n$, and v) the efficiency decreases with respect to $\alpha$ for fixed $\lambda$ and $n$.
Also, $\hat{\mu}_{\text{ERSS}_{2}}$ is more efficient than $\tilde{\mu}$.

In Tables \ref{table.2} and \ref{table.3}, for different values for $l$, we calculate the relative efficiency of $\hat{\mu}_{\text{MSRSS}}$ to  $\hat{\mu}_{\text{RSS}}$,
$$
e_5=e(\hat{\mu}_{\text{MRRSS}}\mid\hat{\mu}_{\text{RSS}})=\frac{C(\alpha_{2})\xi_{n^l}^{2}}{\gamma_{n^l}}.
$$
Note that $e_5$ is  the relative efficiency of $\hat{\mu}_{\text{USSRSS}}$ to  $\hat{\mu}_{\text{RSS}}$, $e_4$, when $l=\infty$.
We can conclude that i)
$\hat{\mu}_{\text{MSRSS}}$ is more efficient than $\hat{\mu}_{\text{RSS}}$, ii) the efficiency increases with respect to $\lambda>0$ for fixed $n$ and $\alpha$, iii) the efficiency increases with respect to $n$ for fixed $\lambda$ and $\alpha$, and iv) the efficiency decreases with respect to $\alpha$ for fixed $\lambda$ and $n$. Also, the efficiency increases when the number of stages, $l$, increases, and $\hat{\mu}_{\text{USSRSS}}$ is more efficient than $\hat{\mu}_{\text{MSRSS}}$ for all $l$.

\begin{table}
\centering\caption{Comparing the efficiency of estimations.} \label{table.1}

\begin{tabular}{|c|c|cc|cc|cc|cc|} \hline
 \multicolumn{2}{|c|}{}  &    \multicolumn{8}{|c|}{$\alpha_{2}$}    \\ \hline
 \multicolumn{2}{|c|}{} & \multicolumn{2}{|c|}{0.8} & \multicolumn{2}{|c|}{1.0} & \multicolumn{2}{|c|}{2.0} & \multicolumn{2}{|c|}{5.0} \\ \hline
$n$ & $\lambda$ & $e_1$ & $e_2$ & $e_1$ & $e_2$ & $e_1$ & $e_2$ & $e_1$ & $e_2$ \\ \hline
2 & 0.25 & 1.0049 & 1.3326 & 1.0039 & 1.3326 & 1.0019 & 1.3325 & 1.0008 & 1.3325 \\
2 & 0.50 & 1.0195 & 1.3304 & 1.0157 & 1.3303 & 1.0077 & 1.3300 & 1.0032 & 1.3298 \\
2 & 0.75 & 1.0440 & 1.3267 & 1.0353 & 1.3264 & 1.0174 & 1.3258 & 1.0073 & 1.3255 \\
2 & 1.00 & 1.0786 & 1.3216 & 1.0629 & 1.3211 & 1.0310 & 1.3200 & 1.0130 & 1.3194 \\ \hline

4 & 0.25 & 1.0088 & 1.1428 & 1.0070 & 1.1428 & 1.0035 & 1.1428 & 1.0015 & 1.1428 \\
4 & 0.50 & 1.0353 & 1.1426 & 1.0283 & 1.1426 & 1.0139 & 1.1426 & 1.0058 & 1.1425 \\
4 & 0.75 & 1.0801 & 1.1423 & 1.0640 & 1.1422 & 1.0314 & 1.1422 & 1.0131 & 1.1422 \\
4 & 1.00 & 1.1443 & 1.1418 & 1.1149 & 1.1418 & 1.0561 & 1.1417 & 1.0234 & 1.1416 \\ \hline

6 & 0.25 & 1.0104 & 1.0909 & 1.0084 & 1.0909 & 1.0041 & 1.0909 & 1.0017 & 1.0909 \\
6 & 0.50 & 1.0421 & 1.0908 & 1.0337 & 1.0908 & 1.0166 & 1.0908 & 1.0069 & 1.0908 \\
6 & 0.75 & 1.0958 & 1.0908 & 1.0764 & 1.0908 & 1.0375 & 1.0908 & 1.0156 & 1.0907 \\
6 & 1.00 & 1.1731 & 1.0907 & 1.1375 & 1.0907 & 1.0669 & 1.0906 & 1.0278 & 1.0906 \\ \hline

8 & 0.25 & 1.0114 & 1.0667 & 1.0091 & 1.0667 & 1.0045 & 1.0667 & 1.0019 & 1.0667 \\
8 & 0.50 & 1.0459 & 1.0666 & 1.0367 & 1.0666 & 1.0181 & 1.0666 & 1.0076 & 1.0666 \\
8 & 0.75 & 1.1045 & 1.0666 & 1.0834 & 1.0666 & 1.0408 & 1.0666 & 1.0170 & 1.0666 \\
8 & 1.00 & 1.1893 & 1.0666 & 1.1501 & 1.0666 & 1.0729 & 1.0666 & 1.0303 & 1.0666 \\ \hline

10 & 0.25 & 1.0120 & 1.0526 & 1.0096 & 1.0526 & 1.0048 & 1.0526 & 1.0020 & 1.0526 \\
10 & 0.50 & 1.0483 & 1.0526 & 1.0386 & 1.0526 & 1.0190 & 1.0526 & 1.0080 & 1.0526 \\
10 & 0.75 & 1.1101 & 1.0526 & 1.0878 & 1.0526 & 1.0430 & 1.0526 & 1.0179 & 1.0526 \\
10 & 1.00 & 1.1996 & 1.0526 & 1.1582 & 1.0526 & 1.0767 & 1.0526 & 1.0319 & 1.0526 \\ \hline

15 & 0.25 & 1.0128 & 1.0345 & 1.0103 & 1.0345 & 1.0051 & 1.0345 & 1.0021 & 1.0345 \\
15 & 0.50 & 1.0517 & 1.0345 & 1.0414 & 1.0345 & 1.0204 & 1.0345 & 1.0085 & 1.0345 \\
15 & 0.75 & 1.1180 & 1.0345 & 1.0940 & 1.0345 & 1.0460 & 1.0345 & 1.0192 & 1.0345 \\
15 & 1.00 & 1.2142 & 1.0345 & 1.1696 & 1.0345 & 1.0821 & 1.0345 & 1.0341 & 1.0345 \\ \hline

20 & 0.25 & 1.0132 & 1.0256 & 1.0106 & 1.0256 & 1.0053 & 1.0256 & 1.0022 & 1.0256 \\
20 & 0.50 & 1.0535 & 1.0256 & 1.0428 & 1.0256 & 1.0211 & 1.0256 & 1.0088 & 1.0256 \\
20 & 0.75 & 1.1221 & 1.0256 & 1.0973 & 1.0256 & 1.0475 & 1.0256 & 1.0198 & 1.0256 \\
20 & 1.00 & 1.2219 & 1.0256 & 1.1756 & 1.0256 & 1.0849 & 1.0256 & 1.0353 & 1.0256 \\ \hline

25 & 0.25 & 1.0135 & 1.0204 & 1.0108 & 1.0204 & 1.0054 & 1.0204 & 1.0022 & 1.0204 \\
25 & 0.50 & 1.0546 & 1.0204 & 1.0436 & 1.0204 & 1.0215 & 1.0204 & 1.0090 & 1.0204 \\
25 & 0.75 & 1.1247 & 1.0204 & 1.0993 & 1.0204 & 1.0485 & 1.0204 & 1.0202 & 1.0204 \\
25 & 1.00 & 1.2267 & 1.0204 & 1.1793 & 1.0204 & 1.0866 & 1.0204 & 1.0360 & 1.0204 \\ \hline

30 & 0.25 & 1.0137 & 1.0169 & 1.0110 & 1.0169 & 1.0054 & 1.0169 & 1.0023 & 1.0169 \\
30 & 0.50 & 1.0553 & 1.0169 & 1.0442 & 1.0169 & 1.0218 & 1.0169 & 1.0091 & 1.0169 \\
30 & 0.75 & 1.1264 & 1.0169 & 1.1007 & 1.0169 & 1.0492 & 1.0169 & 1.0205 & 1.0169 \\
30 & 1.00 & 1.2299 & 1.0169 & 1.1818 & 1.0169 & 1.0878 & 1.0169 & 1.0365 & 1.0169 \\ \hline

\end{tabular}
\end{table}

\begin{table}
\centering\caption{Comparing the efficiency of estimations.} \label{table.2}

\begin{tabular}{|c|c|ccccc|ccccc|} \hline
 &  & \multicolumn{5}{|c|}{$\alpha =0.8$} & \multicolumn{5}{|c|}{$\alpha =1.0$} \\ \hline
 &  & \multicolumn{5}{|c|}{$l$} & \multicolumn{5}{|c|}{$l$} \\ \hline
$n$ & $\lambda $ & 1 & 2 & 5 & 13 & $\infty $ & 1 & 2 & 5 & 13 & $\infty $ \\ \hline
2 & 0.25 & 1.120 & 1.223 & 1.365 & 1.392 & 1.392 & 1.108 & 1.201 & 1.327 & 1.350 & 1.350 \\
2 & 0.50 & 1.250 & 1.482 & 1.827 & 1.894 & 1.894 & 1.225 & 1.430 & 1.728 & 1.785 & 1.786 \\
2 & 0.75 & 1.392 & 1.784 & 2.410 & 2.539 & 2.540 & 1.350 & 1.691 & 2.220 & 2.326 & 2.327 \\
2 & 1.00 & 1.546 & 2.133 & 3.151 & 3.372 & 3.373 & 1.485 & 1.988 & 2.823 & 2.999 & 3.000 \\ \hline

4 & 0.25 & 1.223 & 1.340 & 1.391 & 1.392 & 1.392 & 1.201 & 1.305 & 1.349 & 1.350 & 1.350 \\
4 & 0.50 & 1.482 & 1.765 & 1.892 & 1.894 & 1.894 & 1.430 & 1.675 & 1.784 & 1.786 & 1.786 \\
4 & 0.75 & 1.784 & 2.293 & 2.536 & 2.540 & 2.540 & 1.691 & 2.122 & 2.323 & 2.327 & 2.327 \\
4 & 1.00 & 2.133 & 2.954 & 3.366 & 3.373 & 3.373 & 1.988 & 2.665 & 2.994 & 3.000 & 3.000 \\ \hline

6 & 0.25 & 1.270 & 1.368 & 1.392 & 1.392 & 1.392 & 1.242 & 1.329 & 1.350 & 1.350 & 1.350 \\
6 & 0.50 & 1.592 & 1.834 & 1.894 & 1.894 & 1.894 & 1.525 & 1.734 & 1.785 & 1.786 & 1.786 \\
6 & 0.75 & 1.977 & 2.424 & 2.539 & 2.540 & 2.540 & 1.856 & 2.231 & 2.326 & 2.327 & 2.327 \\
6 & 1.00 & 2.437 & 3.174 & 3.372 & 3.373 & 3.373 & 2.242 & 2.842 & 2.999 & 3.000 & 3.000 \\ \hline

8 & 0.25 & 1.296 & 1.378 & 1.392 & 1.392 & 1.392 & 1.265 & 1.338 & 1.350 & 1.350 & 1.350 \\
8 & 0.50 & 1.655 & 1.860 & 1.894 & 1.894 & 1.894 & 1.580 & 1.756 & 1.786 & 1.786 & 1.786 \\
8 & 0.75 & 2.092 & 2.473 & 2.540 & 2.540 & 2.540 & 1.953 & 2.272 & 2.327 & 2.327 & 2.327 \\
8 & 1.00 & 2.622 & 3.259 & 3.373 & 3.373 & 3.373 & 2.395 & 2.909 & 3.000 & 3.000 & 3.000 \\ \hline

10 & 0.25 & 1.313 & 1.383 & 1.392 & 1.392 & 1.392 & 1.280 & 1.342 & 1.350 & 1.350 & 1.350 \\
10 & 0.50 & 1.697 & 1.872 & 1.894 & 1.894 & 1.894 & 1.616 & 1.767 & 1.786 & 1.786 & 1.786 \\
10 & 0.75 & 2.168 & 2.497 & 2.540 & 2.540 & 2.540 & 2.017 & 2.291 & 2.327 & 2.327 & 2.327 \\
10 & 1.00 & 2.746 & 3.299 & 3.373 & 3.373 & 3.373 & 2.496 & 2.941 & 3.000 & 3.000 & 3.000 \\ \hline

15 & 0.25 & 1.337 & 1.388 & 1.392 & 1.392 & 1.392 & 1.302 & 1.347 & 1.350 & 1.350 & 1.350 \\
15 & 0.50 & 1.757 & 1.884 & 1.894 & 1.894 & 1.894 & 1.668 & 1.777 & 1.786 & 1.786 & 1.786 \\
15 & 0.75 & 2.279 & 2.521 & 2.540 & 2.540 & 2.540 & 2.110 & 2.311 & 2.327 & 2.327 & 2.327 \\
15 & 1.00 & 2.930 & 3.340 & 3.373 & 3.373 & 3.373 & 2.645 & 2.974 & 3.000 & 3.000 & 3.000 \\ \hline

20 & 0.25 & 1.350 & 1.389 & 1.392 & 1.392 & 1.392 & 1.313 & 1.348 & 1.350 & 1.350 & 1.350 \\
20 & 0.50 & 1.789 & 1.889 & 1.894 & 1.894 & 1.894 & 1.695 & 1.781 & 1.786 & 1.786 & 1.786 \\
20 & 0.75 & 2.339 & 2.529 & 2.540 & 2.540 & 2.540 & 2.160 & 2.318 & 2.327 & 2.327 & 2.327 \\
20 & 1.00 & 3.030 & 3.354 & 3.373 & 3.373 & 3.373 & 2.726 & 2.985 & 3.000 & 3.000 & 3.000 \\ \hline

25 & 0.25 & 1.358 & 1.390 & 1.392 & 1.392 & 1.392 & 1.320 & 1.349 & 1.350 & 1.350 & 1.350 \\
25 & 0.50 & 1.809 & 1.891 & 1.894 & 1.894 & 1.894 & 1.712 & 1.783 & 1.786 & 1.786 & 1.786 \\
25 & 0.75 & 2.376 & 2.533 & 2.540 & 2.540 & 2.540 & 2.191 & 2.321 & 2.327 & 2.327 & 2.327 \\
25 & 1.00 & 3.093 & 3.361 & 3.373 & 3.373 & 3.373 & 2.777 & 2.990 & 3.000 & 3.000 & 3.000 \\ \hline

30 & 0.25 & 1.363 & 1.391 & 1.392 & 1.392 & 1.392 & 1.325 & 1.349 & 1.350 & 1.350 & 1.350 \\
30 & 0.50 & 1.822 & 1.892 & 1.894 & 1.894 & 1.894 & 1.724 & 1.784 & 1.786 & 1.786 & 1.786 \\
30 & 0.75 & 2.402 & 2.535 & 2.540 & 2.540 & 2.540 & 2.213 & 2.323 & 2.327 & 2.327 & 2.327 \\
30 & 1.00 & 3.137 & 3.365 & 3.373 & 3.373 & 3.373 & 2.812 & 2.993 & 3.000 & 3.000 & 3.000 \\ \hline
\end{tabular}
\end{table}

\begin{table}
\centering\caption{Comparing the efficiency of estimations.} \label{table.3}
\begin{tabular}{|c|c|ccccc|ccccc|} \hline
 &  & \multicolumn{5}{|c|}{$\alpha =2.0$} & \multicolumn{5}{|c|}{$\alpha =5.0$} \\ \hline
 &  & \multicolumn{5}{|c|}{$l$} & \multicolumn{5}{|c|}{$l$} \\ \hline
$n$ & $\lambda $ & 1 & 2 & 5 & 13 & $\infty $ & 1 & 2 & 5 & 13 & $\infty $ \\ \hline
2 & 0.25 & 1.078 & 1.144 & 1.231 & 1.247 & 1.247 & 1.053 & 1.096 & 1.153 & 1.163 & 1.163 \\
2 & 0.50 & 1.161 & 1.301 & 1.496 & 1.533 & 1.533 & 1.107 & 1.198 & 1.320 & 1.342 & 1.342 \\
2 & 0.75 & 1.247 & 1.473 & 1.799 & 1.862 & 1.862 & 1.163 & 1.305 & 1.500 & 1.537 & 1.537 \\
2 & 1.00 & 1.338 & 1.659 & 2.144 & 2.240 & 2.240 & 1.221 & 1.418 & 1.696 & 1.748 & 1.748 \\ \hline

4 & 0.25 & 1.144 & 1.216 & 1.247 & 1.247 & 1.247 & 1.096 & 1.143 & 1.163 & 1.163 & 1.163 \\
4 & 0.50 & 1.301 & 1.462 & 1.532 & 1.533 & 1.533 & 1.198 & 1.299 & 1.342 & 1.342 & 1.342 \\
4 & 0.75 & 1.473 & 1.741 & 1.860 & 1.862 & 1.862 & 1.305 & 1.466 & 1.536 & 1.537 & 1.537 \\
4 & 1.00 & 1.659 & 2.056 & 2.237 & 2.240 & 2.240 & 1.418 & 1.647 & 1.747 & 1.748 & 1.748 \\ \hline

6 & 0.25 & 1.173 & 1.233 & 1.247 & 1.247 & 1.247 & 1.115 & 1.154 & 1.163 & 1.163 & 1.163 \\
6 & 0.50 & 1.365 & 1.500 & 1.533 & 1.533 & 1.533 & 1.238 & 1.322 & 1.342 & 1.342 & 1.342 \\
6 & 0.75 & 1.577 & 1.806 & 1.862 & 1.862 & 1.862 & 1.369 & 1.504 & 1.537 & 1.537 & 1.537 \\
6 & 1.00 & 1.813 & 2.154 & 2.240 & 2.240 & 2.240 & 1.508 & 1.701 & 1.748 & 1.748 & 1.748 \\ \hline

8 & 0.25 & 1.189 & 1.239 & 1.247 & 1.247 & 1.247 & 1.126 & 1.158 & 1.163 & 1.163 & 1.163 \\
8 & 0.50 & 1.401 & 1.514 & 1.533 & 1.533 & 1.533 & 1.261 & 1.331 & 1.342 & 1.342 & 1.342 \\
8 & 0.75 & 1.638 & 1.830 & 1.862 & 1.862 & 1.862 & 1.405 & 1.518 & 1.537 & 1.537 & 1.537 \\
8 & 1.00 & 1.902 & 2.191 & 2.240 & 2.240 & 2.240 & 1.560 & 1.721 & 1.748 & 1.748 & 1.748 \\ \hline

10 & 0.25 & 1.199 & 1.242 & 1.247 & 1.247 & 1.247 & 1.133 & 1.160 & 1.163 & 1.163 & 1.163 \\
10 & 0.50 & 1.424 & 1.521 & 1.533 & 1.533 & 1.533 & 1.275 & 1.335 & 1.342 & 1.342 & 1.342 \\
10 & 0.75 & 1.677 & 1.842 & 1.862 & 1.862 & 1.862 & 1.429 & 1.525 & 1.537 & 1.537 & 1.537 \\
10 & 1.00 & 1.960 & 2.208 & 2.240 & 2.240 & 2.240 & 1.593 & 1.731 & 1.748 & 1.748 & 1.748 \\ \hline

15 & 0.25 & 1.214 & 1.245 & 1.247 & 1.247 & 1.247 & 1.142 & 1.162 & 1.163 & 1.163 & 1.163 \\
15 & 0.50 & 1.458 & 1.528 & 1.533 & 1.533 & 1.533 & 1.296 & 1.339 & 1.342 & 1.342 & 1.342 \\
15 & 0.75 & 1.734 & 1.853 & 1.862 & 1.862 & 1.862 & 1.462 & 1.532 & 1.537 & 1.537 & 1.537 \\
15 & 1.00 & 2.045 & 2.226 & 2.240 & 2.240 & 2.240 & 1.641 & 1.741 & 1.748 & 1.748 & 1.748 \\ \hline

20 & 0.25 & 1.222 & 1.246 & 1.247 & 1.247 & 1.247 & 1.147 & 1.163 & 1.163 & 1.163 & 1.163 \\
20 & 0.50 & 1.476 & 1.530 & 1.533 & 1.533 & 1.533 & 1.307 & 1.340 & 1.342 & 1.342 & 1.342 \\
20 & 0.75 & 1.764 & 1.857 & 1.862 & 1.862 & 1.862 & 1.480 & 1.534 & 1.537 & 1.537 & 1.537 \\
20 & 1.00 & 2.090 & 2.232 & 2.240 & 2.240 & 2.240 & 1.666 & 1.744 & 1.748 & 1.748 & 1.748 \\ \hline

25 & 0.25 & 1.227 & 1.246 & 1.247 & 1.247 & 1.247 & 1.150 & 1.163 & 1.163 & 1.163 & 1.163 \\
25 & 0.50 & 1.486 & 1.531 & 1.533 & 1.533 & 1.533 & 1.314 & 1.341 & 1.342 & 1.342 & 1.342 \\
25 & 0.75 & 1.782 & 1.859 & 1.862 & 1.862 & 1.862 & 1.491 & 1.535 & 1.537 & 1.537 & 1.537 \\
25 & 1.00 & 2.118 & 2.235 & 2.240 & 2.240 & 2.240 & 1.682 & 1.746 & 1.748 & 1.748 & 1.748 \\ \hline

30 & 0.25 & 1.230 & 1.247 & 1.247 & 1.247 & 1.247 & 1.152 & 1.163 & 1.163 & 1.163 & 1.163 \\
30 & 0.50 & 1.494 & 1.532 & 1.533 & 1.533 & 1.533 & 1.318 & 1.341 & 1.342 & 1.342 & 1.342 \\
30 & 0.75 & 1.795 & 1.860 & 1.862 & 1.862 & 1.862 & 1.498 & 1.536 & 1.537 & 1.537 & 1.537 \\
30 & 1.00 & 2.138 & 2.237 & 2.240 & 2.240 & 2.240 & 1.692 & 1.746 & 1.748 & 1.748 & 1.748 \\ \hline
\end{tabular}
\end{table}

\newpage

\section*{Acknowledgements}
The authors would like to thank an anonymous reviewer of this journal for many constructive suggestions and commands for improving this manuscript.


\end{document}